\newtheorem{thm}{Theorem}[section]
\newtheorem{lem}[thm]{Lemma}
\newtheorem{prop}[thm]{Proposition}
\theoremstyle{definition}
\theoremstyle{remark}
\newtheorem{rem}{Remark}
\title[Hausdorff-Young inequality for Orlicz spaces]{Hausdorff-Young inequality for Orlicz spaces on compact homogeneous manifolds}
\author{Vishvesh Kumar}
\address{Vishvesh Kumar \endgraf
	Department of Mathematics: Analysis, Logic and Discrete Mathematics
	\endgraf
	Ghent University, Belgium}
\email{vishveshmishra@gmail.com}
\author[M. Ruzhansky]{Michael Ruzhansky}
\address{
	Michael Ruzhansky:
	\endgraf
	Department of Mathematics: Analysis, Logic and Discrete Mathematics
	\endgraf
	Ghent University, Belgium
	\endgraf
	and
	\endgraf
	School of Mathematics
	\endgraf
	Queen Mary University of London
	\endgraf
	United Kingdom
	\endgraf
	{\it E-mail address} {\rm michael.ruzhansky@ugent.be}
}
\begin{document}
	
	\begin{abstract} We  prove the classical Hausdorff-Young inequality  for Orlicz spaces on  compact homogeneous manifolds. 
	\end{abstract}
	\keywords{Compact homogeneous spaces, Orlicz spaces, Hausdorff-Young inequality, Hardy-Littlewood inequality, Fourier transform}
	\subjclass[2010]{Primary 22F30, 43A85, 46E30; Secondary 43A30}
	\maketitle

	\section{Introduction}  
	\noindent 
	
	The classical Hausdorff-Young inequality is one of the fundamental inequalities in the theory of Fourier analysis on groups.   For a locally compact abelian group $G,$ $1 \leq p \leq 2$ and $p'= \frac{p}{p-1},$ the classical Hausdorff-Young inequality says, `` If $f \in L^1(G) \cap L^p(G)$ then the Fourier transform satisfies  $\widehat{f} \in L^{p'}(\widehat{G})$ and $\|\widehat{f}\|_{p'} \leq \|f\|_p.$ " This inequality was first given by Hausdorff in 1923 for torus $\mathbb{T}.$ Hausdorff was inspired by the work of W. H. Young in 1912 who proved a similar result but did not formulate his result in terms of inequalities. For a historical discussion on this inequality we refer to \cite{SD, HR}. 
	The Hausdorff-Young inequality for a compact group $G$ is given in Hewitt and Ross \cite{HR, HR74}. Later,  Kunze \cite{Kunze} extended it to unimodular groups.  
	On Lebesgue spaces, the Hausdorff-Young inequality is proved by using the Riesz convexity complex interpolation theorem between $p=1$ and $p=2$. It is well known that between any two Lebesgue spaces there is an Orlicz space which is not a Lebesgue space. M. M. Rao \cite{Raoyoung} studied the Hausdorff-Young inequality for Orlicz spaces on locally compact abelian groups. In fact, the celebrated work of M. M. Rao in the context of Orlicz spaces on locally compact groups (see \cite{Raoyoung, Rao1, rao, Rao2}) motivated  the first author to study the Hausdorff-Young inequality for Orlicz spaces on compact hypergroups  \cite{KR,Kumar, Kumar1}. Recently, the second author with his collaborators established non-commutative version of the aforementioned inequality and of Hardy-Littlewood inequalities on compact homogeneous manifolds and locally compact groups \cite{ANR14,AR16,RR,AR}. In this article, we study the classical Hausdorff-Young inequality with an enlargement of the space, namely, an Orlicz space on  compact homogeneous manifolds.   It is to be noted that the Riesz convexity theorem is useful for the $L^p$-spaces only. For Orlicz spaces results are obtained first by extending a key inequality of Hausdorff-Young in the form of Hardy-Littlewood \cite[p. 170]{Hardy}. It is worth mentioning here that we apply the method of Hausdorff-Hardy-Littlewood \cite{Hardy}.
	
	In Section 2, we present the basics of  compact homogeneous manifolds and of Orlicz spaces in the form we use in the sequel. In Section 3, we prove a key lemma which occupies a major part of this section, and finally, we prove the Hausdorff-Young inequality for Orlicz spaces on compact homogeneous manifolds.

	\section{Preliminaries}
	
	\subsection{Fourier analysis on  compact homogeneous manifolds} 
	Let $G$ be a  compact Lie group and let $K$ be a closed subgroup of $G.$  The left coset space $G/K$ can be seen as a homogeneous manifold with respect to the action of $G$ on $G/K$ given by the left multiplication. The homogeneous manifold $G/K$ has a unique normalized $G$-invariant positive Radon measure $\mu$ such that the Weyl formula holds. There exists a unique differential structure for the quotient $G/K.$  Examples of compact homogeneous manifolds are spheres $\mathbb{S}^n\cong \textnormal{SO}(n+1)/\textnormal{SO}(n),$ real projective spaces $\mathbb{RP}^n\cong \textnormal{SO}(n+1)/\textnormal{O}(n),$ complex projective spaces $\mathbb{CP}^n\cong\textnormal{SU}(n+1)/\textnormal{SU}(1)\times\textnormal{SU}(n)$ and more generally Grassmannians $\textnormal{Gr}(r,n)\cong\textnormal{O}(n)/\textnormal{O}(n-r)\times \textnormal{O}(r).$
	
	Let us denote by $\widehat{G}_0$ the subset of $\widehat{G},$  of representations in $G$, that are of class I with respect to the subgroup $K$. This means that $\pi\in \widehat{G}_0$ if there exists at least one non-zero invariant vector $a$ in the representation space $\mathcal{H}_\pi$ with respect to $K,$ i.e., $\pi(h)a=a$ for every $h\in K.$ Let us denote by $B_{\pi}$ the vector space of these invariant vectors and let $k_{\pi}=\dim B_{\pi}.$ We fix the an orthonormal basis of $\mathcal{H}_\pi$ so that the  first $k_\pi$ vectors are the basis of $B_\pi.$ We note that if $K=\{e\},$ then $G/K$ is equal to the Lie group $G$ and in this case $k_\pi=d_\pi$ for all $\pi \in \widehat{G}.$ On the other hand, if $K$ is a massive subgroup then $k_\pi=1.$ This is the case for the sphere $\mathbb{S}^{n}.$ Other examples can be found in \cite{Vilenkin}. 
	
	For a function $f \in C^\infty(G/K)$ we can write the Fourier series of its canonical lifting $\tilde{f}:=f(gK)$ to $G,$ $\tilde{f} \in C^\infty(G),$ so that the Fourier coefficients satisfy $\widehat{\tilde{f}}=0$ for all representations $\pi \notin \widehat{G}_0.$ Moreover, for class $I$ representations we have $\widehat{\tilde{f}}(\pi)_{ij}=0$ for $i>k_\pi.$ We will often drop the tilde for the simplicity and agree that for a distribution $f \in \mathcal{D}'(G/K) $ we have $\widehat{f}(\pi)=0$ for $\pi \notin \widehat{G}_0$ and $\widehat{f}(\pi)_{ij}=0 $ for $i > k_\pi.$  In order to shorten the notation, for $\pi \in \widehat{G}_0$ it make sense to set $\pi(x)_{ij}=0$ for $j >k_{\pi}.$ We can write the Fourier series of $f$ (or of $\tilde{f}$) in terms of the spherical functions $\pi_{ij}, 1 \leq j \leq k_\pi$, of the representation $\pi \in \widehat{G}_0,$ with respect to the subgroup $K.$ The Fourier series of $f \in C^\infty(G/K)$ is given by 
	$$f(x)= \sum_{\pi \in \widehat{G}_0} d_\pi \sum_{i=1}^{d_\pi} \sum_{j=1}^{k_\pi} \widehat{f}(\pi)_{ji} \pi_{ij}(x),$$ which can also be written in a compact form 
	$$f(x)= \sum_{\pi \in \widehat{G}_0} d_{\pi} \text{Tr}(\widehat{f}(\pi) \pi(x)).$$
	
	For the future reference we note that with these conventions the matrix $\pi(x)\pi(x)^*$ is diagonal matrix with first $k_\pi$ diagonal entries equal to one and others are equal to zero. Therefore, we have $\sum_{i=1}^{d_\pi} \sum_{j=1}^{k_\pi} |\pi(x)_{ij}|^2= \text{Tr}(\pi(x)\pi(x)^*)= k_\pi ^{\frac{1}{2}}.$ The $\ell^p$-spaces on $\widehat{G}_0$ can be defined similar to the spaces $\ell^p(\widehat{G})$ defined in \cite{RuzT} (see also \cite{Hirschman}). First, for the  space of Fourier coefficients of functions on $G/K$ we set 
	\begin{equation}
	\Sigma(G/K)=\{\sigma: \pi \mapsto \sigma(\pi) \in \mathbb{C}^{d_\pi \times d_\pi}: \, [\pi] \in \widehat{G}_0,\,\, \sigma(\pi)_{ij}=0\,\text{for}\, i >k_{\pi}\}.
	\end{equation} 
	Now, we define the Lebesgue spaces $\ell^p(\widehat{G}_0) \subset \Sigma(G/K)$ by the condition
	\begin{equation}
	\|\sigma\|_{\ell^p(\widehat{G}_0)}:= \left( \sum_{[\pi] \in \widehat{G}_0} d_\pi k_\pi^{p(\frac{1}{p}-\frac{1}{2})} \|\sigma(\pi)\|_{HS}^p \right)^{\frac{1}{p}},\,\,\,\, 1 \leq p <\infty,
	\end{equation}
	and $$\|\sigma\|_{\ell^\infty(\widehat{G}_0)}:= \sup_{[\pi] \in \widehat{G}_0} k_\pi^{-\frac{1}{2}} \|\sigma(\pi)\|_{HS}.$$ The following embedding properties hold for these spaces: 
	$$ \ell^{p_1}(\widehat{G}_0) \subset \ell^{p_2}(\widehat{G}_0)\,\, \text{and}\,\, \|\sigma\|_{\ell^{p_2}(\widehat{G}_0)} \leq \|\sigma\|_{\ell^{p_1}(\widehat{G}_0)},\,\,\, 1 \leq p_1 \leq p_2 \leq \infty. $$

	The Hausdorff-Young inequality for Lebesgue spaces on compact homogeneous manifolds is proved in \cite{NRT16} which is stated in the  following theorem. 
	\begin{thm} \label{YH}
		Let $G/K$ be a compact homogeneous manifold with normalized Haar measure $\mu$ and let $f \in L^p(G/K)$ for $1\leq p \leq 2.$ Suppose $\frac{1}{p}+\frac{1}{q}=1,$ then we have 
		\begin{equation} \label{29}
		\|\hat{f}\|_{\ell^q(\widehat{G}_0)} \leq \|f\|_{L^p(G/K)}. 
		\end{equation}   
	\end{thm}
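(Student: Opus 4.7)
My plan is to establish (\ref{29}) at the two endpoints $p=1$ and $p=2$, and then to deduce the result for intermediate $p$ by complex interpolation. The $p=2$ case is actually an equality---the Plancherel identity $\sum_{\pi\in\widehat{G}_{0}} d_{\pi}\|\widehat{f}(\pi)\|_{HS}^{2}=\|f\|_{L^{2}(G/K)}^{2}$---which follows from the Peter--Weyl theorem on $G$ applied to the canonical lift $\tilde{f}$, recalling that with our conventions $\widehat{f}(\pi)$ lives entirely in the $k_{\pi}\times d_{\pi}$ invariant block. At $p=1$ the bound $\|\widehat{f}(\pi)\|_{\mathrm{op}}\le\|f\|_{L^{1}(G/K)}$ is immediate from the unitarity of $\pi$ and the triangle inequality, and since the same convention forces $\widehat{f}(\pi)$ to have rank at most $k_{\pi}$, one has $\|\widehat{f}(\pi)\|_{HS}\le k_{\pi}^{1/2}\|\widehat{f}(\pi)\|_{\mathrm{op}}\le k_{\pi}^{1/2}\|f\|_{L^{1}(G/K)}$, which is precisely $\|\widehat{f}\|_{\ell^{\infty}(\widehat{G}_{0})}\le\|f\|_{L^{1}(G/K)}$.

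For the interpolation step, the first issue is that the weight $k_{\pi}^{q(1/q-1/2)}$ in $\ell^{q}(\widehat{G}_{0})$ varies with $q$, so the target is not a fixed measure space. I would absorb this by rewriting
\[
d_{\pi}k_{\pi}^{q(1/q-1/2)}\|\sigma(\pi)\|_{HS}^{q}=(d_{\pi}k_{\pi})\,\|k_{\pi}^{-1/2}\sigma(\pi)\|_{HS}^{q},
\]
which identifies $\ell^{q}(\widehat{G}_{0})$ isometrically with the vector-valued Lebesgue space $L^{q}(\widehat{G}_{0},\nu;HS)$, where $\nu$ is the $q$-independent counting measure with weight $d_{\pi}k_{\pi}$ and $HS$ denotes the Hilbert--Schmidt space of matrices supported in the invariant block. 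Under this identification the linear operator
\[
T:f\longmapsto\bigl(k_{\pi}^{-1/2}\widehat{f}(\pi)\bigr)_{\pi\in\widehat{G}_{0}}
\]
has norm at most $1$ as a map $L^{1}(G/K)\to L^{\infty}(\widehat{G}_{0},\nu;HS)$ and as a map $L^{2}(G/K)\to L^{2}(\widehat{G}_{0},\nu;HS)$ by the two endpoint estimates. The Hilbert-space-valued Riesz--Thorin theorem then yields $\|T\|_{L^{p}\to L^{q}}\le 1$ for every $1\le p\le 2$ with $1/p+1/q=1$, which is exactly (\ref{29}).

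The principal obstacle is the one just dealt with: the $q$-dependent weight on the Fourier side has to be removed before any interpolation can take place. Once the rescaling $\widehat{f}(\pi)\mapsto k_{\pi}^{-1/2}\widehat{f}(\pi)$ has placed the whole family of targets into a common $L^{q}$-scale on $(\widehat{G}_{0},\nu)$, the argument reduces to an essentially classical application of Riesz--Thorin in its Hilbert-space-valued form, and no further subtleties arise.
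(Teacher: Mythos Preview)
The paper does not give its own proof of Theorem~\ref{YH}; the result is quoted from \cite{NRT16} and used as input to the main Lemma~3.1. Your argument is correct and is precisely the standard route one expects in that reference: the rescaling $\widehat{f}(\pi)\mapsto k_{\pi}^{-1/2}\widehat{f}(\pi)$ together with the fixed weight $\nu(\{\pi\})=d_{\pi}k_{\pi}$ removes the $q$-dependence of the target norm, the rank bound $\mathrm{rank}\,\widehat{f}(\pi)\le k_{\pi}$ gives the $p=1$ endpoint, Plancherel gives $p=2$, and Hilbert-space-valued Riesz--Thorin yields the intermediate cases with constant $1$.
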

	
	Here we would like to mention that there is a well-established theory of another family of Lebesgue spaces $\ell^p_{\text{sch}}$ on $\widehat{G}_0$ defined using Schatten $p$-norm $\|\cdot\|_{S^p}$ instead of Hilbert-Schmidt norm $\|\cdot\|_{HS}$ on the space of $(d_\pi \times d_\pi)$-dimensional matrices. Indeed, the space $\ell^p_{\text{sch}}(\widehat{G}_0) \subset \Sigma(G/H)$ is defined by the norm 
	\begin{equation}
	\|\sigma\|_{\ell^p_{\text{sch}}(\widehat{G}_0)}:= \left(\sum_{[\pi] \in \widehat{G}_0} d_\pi  \|\sigma(\pi)\|_{S^p}^p \right)^{\frac{1}{p}}\,\,\, \sigma \in \Sigma(G/K),\,\, 1\leq p<\infty,
	\end{equation}
	and
	$$\|\sigma\|_{\ell^\infty_{\text{sch}}(\widehat{G}_0)}:= \sup_{[\pi] \in \widehat{G}_0} \|\sigma(\pi)\|_{\mathcal{L}(\mathcal{H}_\pi)}\,\,\,\, \sigma \in \Sigma(G/K). $$
	
	The following proposition presents the relation between both  norms on $\ell^p(\widehat{G}_0).$ 
	\begin{prop}
		For $1 \leq p \leq 2,$ we have the following  continuous embeddings as  well as the estimates:  
		$\ell^p(\widehat{G}_0) \hookrightarrow \ell^p_{sch}(\widehat{G}_0)$ and $\|\sigma\|_{\ell^p_{sch}(\widehat{G}_0)} \leq \|\sigma\|_{\ell^p(\widehat{G}_0)}$ \, for all $\sigma \in \Sigma(G/K).$ For $2 \leq p \leq \infty,$ we have 
		$\ell^p_{sch}(\widehat{G}_0) \hookrightarrow \ell^p(\widehat{G}_0)  $ and $ \|\sigma\|_{\ell^p(\widehat{G}_0)} \leq  \|\sigma\|_{\ell^p_{sch}(\widehat{G}_0)} $ \, for all $\sigma \in \Sigma(G/K).$
	\end{prop}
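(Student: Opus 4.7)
The proposition compares two norms that differ only in whether singular values of each block $\sigma(\pi)$ are aggregated in the $\ell^2$ sense (Hilbert--Schmidt) or in the $\ell^p$ sense (Schatten). The plan is therefore to reduce everything, blockwise in $\pi$, to the elementary comparison of $\ell^p$-norms on a vector of finite length, exploiting the crucial structural fact that $\sigma(\pi)_{ij}=0$ for $i>k_{\pi}$. This fact implies $\operatorname{rank}\sigma(\pi)\le k_{\pi}$, so the singular-value sequence of $\sigma(\pi)$ has at most $k_{\pi}$ nonzero entries.

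First I would record the pointwise inequalities. Let $s_1\ge s_2\ge\cdots\ge s_{k_\pi}\ge 0$ be the nonzero singular values of $\sigma(\pi)$. For $1\le p\le 2$, Hölder's inequality applied to $\sum_{i=1}^{k_\pi} s_i^{p}\cdot 1$ with exponents $2/p$ and $2/(2-p)$ gives
\begin{equation*}
\|\sigma(\pi)\|_{S^{p}}\;\le\;k_{\pi}^{\frac{1}{p}-\frac{1}{2}}\,\|\sigma(\pi)\|_{HS}.
\end{equation*}
For $2\le p<\infty$, the same Hölder argument applied to $\sum_{i=1}^{k_\pi}s_i^{2}$ yields
\begin{equation*}
\|\sigma(\pi)\|_{HS}\;\le\;k_{\pi}^{\frac{1}{2}-\frac{1}{p}}\,\|\sigma(\pi)\|_{S^{p}}.
\end{equation*}
For $p=\infty$, since at most $k_\pi$ singular values contribute, $\|\sigma(\pi)\|_{HS}\le k_{\pi}^{1/2}\,\|\sigma(\pi)\|_{\mathcal{L}(\mathcal{H}_\pi)}$.

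Next I would raise these pointwise bounds to the $p$-th power, multiply by the weight $d_\pi$, and sum over $[\pi]\in\widehat{G}_0$. In the first case the factor $k_{\pi}^{p(1/p-1/2)}$ produced on the right-hand side matches exactly the weight appearing in the definition of $\|\sigma\|_{\ell^{p}(\widehat{G}_0)}$, so one obtains $\|\sigma\|_{\ell^{p}_{\text{sch}}(\widehat{G}_0)}\le \|\sigma\|_{\ell^{p}(\widehat{G}_0)}$. In the second case the factor $k_{\pi}^{p/2-1}$ on the right, when substituted into $\|\sigma\|_{\ell^{p}(\widehat{G}_0)}^{p}=\sum d_\pi k_\pi^{1-p/2}\|\sigma(\pi)\|_{HS}^{p}$, cancels the power of $k_\pi$ exactly, yielding $\|\sigma\|_{\ell^{p}(\widehat{G}_0)}\le\|\sigma\|_{\ell^{p}_{\text{sch}}(\widehat{G}_0)}$. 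The $p=\infty$ case is handled directly by dividing the pointwise bound by $k_\pi^{1/2}$ and taking the supremum.

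There is no real obstacle here; the only care required is exponent bookkeeping and the isolated treatment of $p=\infty$. The whole argument is a transparent consequence of the rank bound $\operatorname{rank}\sigma(\pi)\le k_\pi$ combined with Hölder's inequality, and it shows simultaneously that the constants in both embeddings are sharp (equality holds whenever $\sigma(\pi)$ has all nonzero singular values equal, for every $\pi$).
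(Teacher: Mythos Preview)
Your argument is correct. The paper itself does not give a proof here; it simply refers the reader to \cite{VR}. Your approach---using the rank bound $\operatorname{rank}\sigma(\pi)\le k_\pi$ (forced by $\sigma(\pi)_{ij}=0$ for $i>k_\pi$) together with H\"older's inequality on the singular-value sequence---is the standard one and is almost certainly what the cited reference does as well; this is the natural route and there is no meaningfully different proof to compare against.
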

	\begin{proof}
		The proof of proposition can be found in \cite{VR}.
	\end{proof}

	The space $\ell^p(\widehat{G}_0)$ and the Hausdorff-Young inequality for it become useful for convergence of Fourier series and characterization of Gevrey-Roumieu ultradifferentiable functions and Gevrey-Beurling ultradifferentiable functions on compact homogeneous manifolds \cite{AR14}. For more detail on Fourier analysis on compact homogeneous manifolds we refer to \cite{Vilenkin,NRT16,Applebaum,AR14}.
	
	\subsection{Basics of Orlicz spaces} For basics of Orlicz spaces one can refer to excellent monographs  \cite{Zygmund, Rao, rao} and articles \cite{Raoyoung, Rao1, Rao2, Kumar}. However we present a few definitions and results here in the form we need. 
	
	A non-zero convex function $\Phi : [0, \infty) \rightarrow [0, \infty]$ is called a {\it Young function} if  $\Phi(0)=0$ and $\lim_{x \rightarrow \infty} \Phi(x)=\infty.$ For any given Young function $\Phi$  the {\it complimentary function} $\Psi$ of $\Phi$  is given by  
	$$ \Psi(y)= \mbox{sup}\{x|y|- \Phi(x): x \geq 0\} \,\,\,\,   (y \geq 0),$$ which is also a Young function. If $\Psi$ is the complementary function of $\Phi$ then $\Phi$ is the complimentary function of $\Psi;$  the pair $(\Psi, \Phi)$ is called a {\it complementary pair}. In fact, a complementary pair of Young functions satisfies $$ xy \leq \Phi(x)+\Psi(y)\,\,\,(x,y \geq 0).$$  If a complimentary pair of Young functions $(\Phi, \Psi)$ satisfies $\Phi(1)+\Psi(1)=1$ then the pair $(\Phi, \Psi)$ is called a {\it normalized complimentary pair}. 
	
	Let $G/K$ be a compact homogeneous manifold with a left Haar measure $\mu.$  Denote the set of all complex valued $\mu$-measurable functions on $G/K$ by $L^0(G/K).$ Given a Young function $\Phi$, the {\it modular function} $\rho_\Phi : L^0(G/K) \rightarrow \mathbb{R}$ is defined by $\rho_\Phi(f):=\int_{G/K} \Phi(|f|)\, d\mu$ and the {\it Orlicz space} is defined by 
	$$L^{\Phi}(G/K) := \left\lbrace f \in L^0(G/K) : \rho_\Phi(af)< \infty~~ \mbox{for ~~some}\,\, a>0  \right\rbrace.$$ 
	Then the Orlicz space is a Banach space with respect to the norm $N_\Phi(\cdot),$ called as {\it Luxumburg norm},  defined by
	$$N_\Phi(f) := \mbox{inf} \left\lbrace \lambda >0: \int_{G/K} \Phi \left(\frac{|f|}{\lambda} \right) \,d\mu \leq \Phi(1) \right\rbrace.$$ 
	
	One can also define the norm $\|\cdot\|_\Phi,$ called {\it Orlicz norm} on $L^\Phi(G/K)$ by 
	$$ \|f\|_\Phi:= \sup \left\lbrace \int_{G/K}|fv|d \mu: \int_{G/K} \Psi \left(|v| \right) \,d\mu \leq \Phi(1) \right\rbrace.$$
	These two norms are equivalent: $\Phi(1) N_\Phi(\cdot) \leq \|\cdot\|_\Phi \leq 2 N_\Phi(\cdot).$ Also, it is known that $N_\Phi(f) \leq 1$ if and only if $\rho_\Phi(f) \leq 1.$ We recover the Lebesgue spaces $L^p,\, 1 \leq p < \infty,$ by considering the Young function $\Phi(x)=\frac{x^p}{p}.$ The complementary young function $\Psi$ corresponding to $\Phi$ is given by $\Psi(x)= \frac{x^q}{q},$ where $q=\frac{p}{p-1}.$  Other examples of Young functions are $e^x-x-1,\, \cosh x-1$ and  $x^p \ln(x).$
	
	The following H$\ddot{\text{o}}$lder inequality holds for Orlicz spaces: for any complementary pair $(\Phi, \Psi)$ and for $f\in L^\Phi(G/K),\,\,g \in L^\Psi(G/K),$ we have
	$$ \int_{G/K} |fg| \, d\mu \leq \min\{ N_\Phi(f)\|g\|_{\Psi},\,\, \|f\|_{\Phi}N_\Psi(g) \}.$$
	
	If $(\Phi, \Psi)$ is a normalized complimentary  pair of Young functions then the above H$\ddot{\text{o}}$lder inequality becomes \cite[P. 58 and P. 62]{Rao} (see also \cite[P. 27]{rao}): 
	\begin{equation} \label{Hol}
	\left| \int_{G/K} fg \, d\mu \right| \leq  N_\Phi(f) \,N_\Psi(g).
	\end{equation}
	
	Let $C(G/K)$ denote the space of complex valued continuous functions on $G/K.$  It can be easily proved that $L^\Phi(G/K) \subset L^1(G/K)$ (for example see \cite{Kumar}) as $G/K$ is compact. Therefore, the closure of $C(G/K)$ inside $L^\Phi(G/K)$ denoted by   $M^\Phi(G/K)$ is same as $L^\Phi(G/K)$ which is not the case in general. A Young function $\Phi$ satisfies the $\Delta_2$ condition if there exist a constant $C >0$ and $x_0\geq 0$ such that $\Phi(2x) \leq C \Phi(x)$ for all $x\geq x_0$. In this case we write $\Phi \in \Delta_2 $. If $\Phi \in \Delta_2$, then it follows that $(L^\Phi)^*= L^\Psi$. If, in addition, $\Psi \in \Delta_2$, then the Orlicz space $L^\Phi$ is a reflexive Banach space.
	
	To define the space $\ell^\Phi(\widehat{G}_0)$ we follow the similar construction as in $\ell^p.$ The Orlicz space $\ell^\Phi(\widehat{G}_0) \subset \Sigma(G/K)$ is defined by the norm 
	\begin{equation}
	N_\Phi(\sigma)= \inf \left\{ \lambda>0: \sum_{\pi \in \widehat{G}_0} \Phi \left( \frac{k_\pi^{-\frac{1}{2}}\|\sigma(\pi)\|_{HS}}{\lambda}\right)\, k_{\pi}d_{\pi} \leq \Phi(1) \right\}.
	\end{equation}
	The partial order $\prec$ on th set of all Young functions is defined as: $\Phi_1 \prec \Phi_2$ whenever $\Phi_1(ax) \leq b \Phi_2(x)$ for $|x| \geq x_0>0$ and $\Phi_2(cx) \leq d \Phi_1(x)$ for all $|x| \leq x_1,$ where $a,b,c,d,x_0$ and $x_1$ are fixed positive constants independent of $x.$ In particular, for $L^p$-spaces with $p \geq 1$ we can see that $a=b=c=d=1, x_1 \geq 1$ and $x_0 \geq 1.$ With the help of this ordering we can define inclusion relation in Orlicz spaces: if $\Phi_1, \Phi_2$ are continuous  Young functions and $\Phi_1 \prec \Phi_2$ then $L^{\Phi_2}(G/K) \subset L^{\Phi_1}(G/K) $ and $N_{\Phi_1}(\cdot) \leq \alpha\, N_{\Phi_2}(\cdot)$ for some $\alpha>0.$ 
	
	Also, for $\widehat{G}_0,$ the space $L^{\Phi}(\widehat{G}_0)$ becomes $\ell^\Phi(\widehat{G}_0)$ under the identification of a matrix valued function $\sigma \in \ell^\Phi(\widehat{G}_0)$ with a non-negative function $\widetilde{\sigma}(\pi)$ defined on $\widehat{G}_0$ by  $\widetilde{\sigma}(\pi):=k_\pi^{-\frac{1}{2}}\|\sigma(\pi)\|_{HS} $ and in this case $\Phi_1 \prec \Phi_2$ implies that $\ell^{\Phi_1}(\widehat{G}_0) \subset \ell^{\Phi_2}(\widehat{G}_0)$ and $N_{\Phi_2}(\cdot) \leq \beta\, N_{\Phi_1}(\cdot)$ for some $\beta >0.$  The following result is well-known (see \cite[Lemma 1, p. 209]{rao}). 
	\begin{lem} \label{le} Let $(\Phi_i, \Psi_i), \,i=1,2$ be complementary pairs of continuous Young functions and let $\Phi_1 \prec \Phi_2$. Then $\Psi_2 \prec \Psi_1$.
	\end{lem}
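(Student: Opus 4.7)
The plan is to translate each of the two inequalities that define $\Phi_1 \prec \Phi_2$ into one of the two that define $\Psi_2 \prec \Psi_1$, using the Legendre-type representation $\Psi(y)=\sup_{x\geq 0}(xy-\Phi(x))$. The hypothesis at infinity on $\Phi_1$ will produce the infinity condition for $\Psi_2$, while the hypothesis at zero on $\Phi_2$ will produce the zero condition for $\Psi_1$. The net effect is that the complementary transform reverses the order in an appropriate sense.

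First I would handle the infinity part. From $\Phi_1(ax)\leq b\Phi_2(x)$ for $x\geq x_0$, equivalently $\Phi_2(x)\geq \Phi_1(ax)/b$ on that range, I split the sup in $\Psi_2(y)$ at $x_0$. The piece over $[0,x_0]$ is bounded trivially by $x_0 y$, while on $[x_0,\infty)$ the substitution $t=ax$ together with the hypothesis yields
\[
\sup_{x\geq x_0}\bigl(xy-\Phi_2(x)\bigr)\leq \frac{1}{b}\sup_{t\geq ax_0}\bigl(\tfrac{by}{a}t-\Phi_1(t)\bigr)\leq \frac{1}{b}\Psi_1\!\bigl(\tfrac{b}{a}y\bigr).
\]
Hence $\Psi_2(y)\leq \max\{x_0 y,\,(1/b)\Psi_1(by/a)\}$. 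For $y$ sufficiently large the second term dominates, since a non-degenerate Young function satisfies $\Psi_1(s)/s\to\infty$. After the reparametrization $z=by/a$ this gives $\Psi_2(a'z)\leq b'\Psi_1(z)$ for all large $z$, with $a'=a/b$ and $b'=1/b$, which is the desired infinity condition for $\Psi_2\prec\Psi_1$.

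Next I would handle the zero part. The key preliminary is that for $y$ small the sup defining $\Psi_1(y)$ is attained already on $[0,x_1]$: by convexity of $\Phi_1$ the ratio $\Phi_1(x)/x$ is non-decreasing, so $\Phi_1(x)\geq x\,\Phi_1(x_1)/x_1$ for $x\geq x_1$, and hence $xy-\Phi_1(x)\leq 0\leq \Psi_1(y)$ whenever $y\leq \Phi_1(x_1)/x_1$. On $[0,x_1]$, the hypothesis $\Phi_1(x)\geq \Phi_2(cx)/d$ together with $u=cx$ gives
\[
\Psi_1(y)\leq \frac{1}{d}\sup_{u\leq cx_1}\bigl(\tfrac{dy}{c}u-\Phi_2(u)\bigr)\leq \frac{1}{d}\Psi_2\!\bigl(\tfrac{d}{c}y\bigr),
\]
and the reparametrization $z=dy/c$ yields $\Psi_1(c'z)\leq d'\Psi_2(z)$ for $z$ small, with $c'=c/d$ and $d'=1/d$. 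Together with the previous step this gives $\Psi_2\prec \Psi_1$.

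The main obstacle is purely bookkeeping around the splitting points: in Step~1, absorbing the linear remainder $x_0 y$ into $(1/b)\Psi_1(by/a)$ for large $y$, and in Step~2, localizing the argmax of $xy-\Phi_1(x)$ to $[0,x_1]$ for small $y$. Both reduce to standard properties of non-degenerate continuous Young functions — superlinear growth of the complementary function at infinity, and convexity at zero — so the obstacle is technical rather than structural.
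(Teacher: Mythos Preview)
The paper does not actually prove this lemma: it is stated as a well-known fact with a citation to Rao--Ren, \emph{Applications of Orlicz spaces}, Lemma~1, p.~209, and no argument is given in the text. So there is no ``paper's own proof'' to compare against.

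Your proposal is a correct, self-contained proof via the Legendre representation $\Psi(y)=\sup_{x\ge 0}(xy-\Phi(x))$. The two steps are sound: in Step~1 the split at $x_0$ and the substitution $t=ax$ give $\Psi_2(y)\le\max\{x_0y,\ b^{-1}\Psi_1(by/a)\}$, and the absorption of the linear piece for large $y$ is justified because continuity (finiteness) of $\Phi_1$ forces $\Psi_1(s)/s\to\infty$; in Step~2 the convexity bound $\Phi_1(x)\ge x\,\Phi_1(x_1)/x_1$ for $x\ge x_1$ correctly localises the supremum to $[0,x_1]$ for $y\le \Phi_1(x_1)/x_1$, after which the substitution $u=cx$ yields the small-argument inequality. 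One minor caveat: if $\Phi_1$ vanishes on $[0,x_1]$ the threshold $\Phi_1(x_1)/x_1$ is zero and the localisation is vacuous; in that degenerate case $\Phi_2$ also vanishes on $[0,cx_1]$ by hypothesis, and the near-zero comparison $\Psi_1(y)\le c^{-1}\Psi_2(y)$ follows directly from $\Psi_1(y)=x_1y$ and $\Psi_2(y)\ge cx_1y$ for small $y$. With that trivial patch your argument is complete and is precisely the kind of proof one finds in the cited reference.
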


	\section{The Main result}
	Throughout this section, we assume that $G/K$ is a compact homogeneous manifold and $ \widehat{G}_0$ be the set of type I irreducible inequivalent continuous representations of $G$ for our convenience.  
	From now onwards, we assume that the pair of complimentary continuous Young functions $(\Phi, \Psi)$ is a normalized pair. Note that continuity of a Young function guaranties the existence of its derivative \cite[Corollary 2]{Rao}. Also, since $\Phi$ is a positive
continuous convex function on $[o, \infty)$, it is increasing.
	
	For $f \in L^\Phi(G/K)$ define $F_f:\widehat{G}_0 \rightarrow \mathbb[0, \infty)$ by  \begin{equation} \label{F_f}
	F_f^2(\pi) = \sum_{i=1}^{d_{\pi}} \sum_{j=1}^{k_{\pi}}  \frac{|\widehat{f}(\pi)_{i,j}|^2}{k_{\pi}}  = \frac{\text{Tr}(\widehat{f}(\pi)^*\widehat{f}(\pi))}{k_{\pi}} \,\,\, \text{for} \,\pi \in \widehat{G}_0.
	\end{equation}  
	Now, define the gauge norm of $F_f$ by
	\begin{equation} \label{dis}
	N_\Phi(F_f):= \inf \left\lbrace \lambda>0 : \sum_{\pi \in \widehat{G}_0} \Phi \left( \frac{F_f(\pi)}{\lambda}\right)\, k_{\pi}d_{\pi} \leq \Phi(1) \right\rbrace. 
	\end{equation}
	We note here that for $f \in L^\Phi(G/K),$ $\widehat{f}: \widehat{G}_0  \rightarrow \bigcup_{\pi \in \widehat{G}_0} \mathbb{C}^{d_\pi \times d\pi}.$ So, the norm $N_\Phi(F_f)$ is same as the norm $N_\Phi(\widehat{f})$ on the non-commutative Orlicz space $\ell^\Phi(\widehat{G}_0)$ as $F_f(\pi)= k_\pi^{-\frac{1}{2}}\|\widehat{f}(\pi)\|_{HS}$. The space $(\ell^\Phi(\widehat{G}_0), N_\Phi(\cdot))$ is a  Banach space. If $\Phi$ is continuous then there exists $\lambda_0:= N_\Phi(F_f)$ such that inequality in \eqref{dis} is an equality with $\lambda = \lambda_0$ on the left, i.e.,  $ \sum_{\pi \in \widehat{G}_0} \Phi \left( \frac{F_f(\pi)}{\lambda_0}\right)\,d_{\pi}\, k_{\pi} = \Phi(1).$

	The proof of our main result depends on the following key lemma which is an extension of an important inequality in the case of $L^p$ due to Hardy and Littlewood   \cite{Hardy}.
	
	\begin{lem} \label{m}   Let $G/K$ be a compact homogeneous manifold with the normalized measure $\mu$ and let $(\Phi, \Psi)$ be a pair of continuous normalized Young functions such that \begin{itemize}
			\item[(i)] $\Phi \prec \Phi_0,$ where $\Phi_0(t)= \frac{1}{2}|t|^2,$ 
			\item[(ii)] $\Psi'(t) \leq c_0\, t^p, \,\forall\, t \geq 0,$ for some $p \geq 1,$ and for some $c_0>0.$
		\end{itemize}  Suppose $\Lambda$ is a finite subset of $ \widehat{G}_0.$ Define $f_{\Lambda}: G/K \rightarrow \mathbb{C}$ by 
		\begin{equation} \label{6}
		f_{\Lambda}(x):= \sum_{\pi \in \Lambda} d_{\pi} \sum_{i=1}^{d_{\pi}} \sum_{j=1}^{k_{\pi}} c_{i,j}^\pi \pi_{i,j}(x),
		\end{equation}
		where $c_{i,j}^\pi \in \mathbb{C}.$ If $F_{\Lambda}=F_f$ is as in \eqref{F_f} with $f=f_{\Lambda},$ then
		\begin{equation} \label{7}
		N_\Psi(F_{\Lambda}) \leq \bar{r}_0\, N_\Phi(f_{\Lambda}),
		\end{equation} where $\bar{r_0}>0$  depends only on $\Phi$ and the ordering $ \prec.$
	\end{lem}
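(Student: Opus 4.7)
The plan is to reduce the Orlicz inequality to the classical Hausdorff-Young inequality of Theorem~\ref{YH} by sandwiching $\Phi$ and $\Psi$ between polynomials using the hypotheses. Set $q_0 := (p+1)/p$; from $p \geq 1$ in (ii) (which is compatible with (i) via Lemma~\ref{le}) one has $q_0 \in [1,2]$ with dual exponent $q_0' = p+1$. The argument then has three steps: (a) a polynomial lower bound on $\Phi$ produced from (ii) by Young-function duality gives the continuous embedding $L^\Phi(G/K) \hookrightarrow L^{q_0}(G/K)$ with an explicit constant; (b) Theorem~\ref{YH} applied at the exponent $q_0$ yields the polynomial tail estimate $\sum_\pi d_\pi k_\pi F_\Lambda(\pi)^{p+1} \lesssim \|f_\Lambda\|_{q_0}^{p+1}$; (c) a polynomial upper bound on $\Psi$, again extracted from (ii), then converts this into the required Luxemburg bound $N_\Psi(F_\Lambda) \leq \bar{r}_0 N_\Phi(f_\Lambda)$.

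For step (a), the main input is that for continuous complementary Young functions the right derivatives satisfy $\Phi' = (\Psi')^{-1}$. From $\Psi'(t) \leq c_0 t^p$, inversion yields $\Phi'(s) \geq (s/c_0)^{1/p}$ for all $s \geq 0$, and integrating gives the pointwise lower bound $\Phi(s) \geq c_1 s^{q_0}$ with $c_1 = p/((p+1) c_0^{1/p})$. Since $\Phi$ is continuous, at $\lambda_0 := N_\Phi(f_\Lambda)$ one has $\int_{G/K} \Phi(|f_\Lambda|/\lambda_0)\,d\mu = \Phi(1)$; combining with the pointwise lower bound gives $c_1 \|f_\Lambda\|_{q_0}^{q_0} / \lambda_0^{q_0} \leq \Phi(1)$, hence $\|f_\Lambda\|_{q_0} \leq C_1\, N_\Phi(f_\Lambda)$ with $C_1 = (\Phi(1)/c_1)^{1/q_0}$.

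For step (b), apply Theorem~\ref{YH} to $f_\Lambda \in L^{q_0}(G/K)$; using $\|\widehat{f_\Lambda}(\pi)\|_{HS} = k_\pi^{1/2} F_\Lambda(\pi)$ and the definition of the $\ell^{p+1}(\widehat{G}_0)$-norm, the weight $k_\pi^{(p+1)(1/(p+1)-1/2)}$ combines with $k_\pi^{(p+1)/2}$ to give exactly $\sum_\pi d_\pi k_\pi F_\Lambda(\pi)^{p+1} \leq \|f_\Lambda\|_{q_0}^{p+1}$. For step (c), integrating $\Psi'(t) \leq c_0 t^p$ yields $\Psi(t) \leq c_0 t^{p+1}/(p+1)$ for all $t \geq 0$, so for any $\lambda > 0$,
\[
\sum_{\pi \in \widehat{G}_0} \Psi\!\left(\frac{F_\Lambda(\pi)}{\lambda}\right) d_\pi k_\pi \;\leq\; \frac{c_0\, C_1^{p+1}\, N_\Phi(f_\Lambda)^{p+1}}{(p+1)\, \lambda^{p+1}}.
\]
Choosing $\bar{r}_0 := C_1 \bigl(c_0/((p+1)\Psi(1))\bigr)^{1/(p+1)}$ and $\lambda = \bar{r}_0\, N_\Phi(f_\Lambda)$ makes the right-hand side equal to $\Psi(1)$, so the definition of $N_\Psi(F_\Lambda)$ (the analogue of \eqref{dis} for $\Psi$) yields $N_\Psi(F_\Lambda) \leq \bar{r}_0\, N_\Phi(f_\Lambda)$, as desired.

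The main step to verify carefully is the Young duality identity $\Phi' = (\Psi')^{-1}$, which is what converts the polynomial upper bound on $\Psi'$ from (ii) into a polynomial lower bound on $\Phi$ of matching exponent $q_0$. This matching is precisely what allows the entire argument to be run at a single endpoint of the classical Hausdorff-Young inequality, circumventing the absence of a Riesz-Thorin type interpolation for Orlicz spaces noted in the introduction.
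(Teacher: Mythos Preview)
Your proof is correct and takes a genuinely different, much shorter route than the paper's.

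The paper follows the Hardy--Littlewood extremal method: it introduces $M=M_\Phi(\Lambda):=\sup_{f\neq0} S_\Psi(\tilde f_\Lambda)/N_\Phi(f)$, first shows $M<\infty$ with a bound depending on $\Lambda$, and then runs a bootstrap through a tower of auxiliary Young functions $\Psi_1(t)=\Psi(t^2)$, $\Psi_2(t)=\Psi_1(t^p)$, $\Psi_3(t)=\tfrac{c_0}{p+1}t^{2p(p+1)}$ (and their complements), combining an extremizer for $M$, the equality case of H\"older, Parseval, and Bessel, to reach $M_\Phi^{p+1}\le r_3^p M_{\Phi_3}^p$; only at the very end does the classical Hausdorff--Young (Theorem~\ref{YH}) enter, at the exponent $2p(p+1)$, to bound $M_{\Phi_3}$.

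You instead observe that hypothesis~(ii) alone already sandwiches $(\Phi,\Psi)$ between the single Lebesgue pair $(q_0,p+1)$ with $q_0=(p+1)/p\in(1,2]$: integrating $\Psi'(t)\le c_0t^p$ gives $\Psi(t)\le \tfrac{c_0}{p+1}t^{p+1}$, and the generalized-inverse relation between $\Phi'$ and $\Psi'$ turns the same bound into $\Phi(s)\ge c_1 s^{q_0}$. These two pointwise inequalities give the embeddings $L^\Phi\hookrightarrow L^{q_0}$ and $\ell^{p+1}(\widehat G_0)\hookrightarrow \ell^\Psi(\widehat G_0)$ directly, and one application of Theorem~\ref{YH} at the exponent $q_0$ closes the loop; the $k_\pi$-weights match exactly because $k_\pi^{(p+1)(\frac{1}{p+1}-\frac12)}\cdot k_\pi^{(p+1)/2}=k_\pi$. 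Note that your argument never uses hypothesis~(i), and indeed~(ii) with $p\ge1$ already forces the relevant ordering, so this is not a gap. The only point worth spelling out more carefully is the derivative-inverse step: one should phrase it via the generalized inverse $\Phi'(s)=\sup\{t\ge0:\Psi'(t)\le s\}$ rather than assume $\Psi'$ is strictly increasing, but the conclusion $\Phi'(s)\ge(s/c_0)^{1/p}$ follows just the same.

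What each approach buys: your argument is shorter, yields an explicit constant $\bar r_0=C_1\bigl(c_0/((p+1)\Psi(1))\bigr)^{1/(p+1)}$, and makes transparent that the lemma is essentially the classical Hausdorff--Young at a single exponent once~(ii) is unpacked. The paper's Hardy--Littlewood scheme is more elaborate but more robust: it is the template that survives when the growth condition on $\Psi'$ is weakened (so that no single Lebesgue sandwich is available), and it is the method the authors explicitly set out to illustrate.
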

	\begin{proof}
		Let $\Lambda$ be a finite subset of $\widehat{G}_0$. If $f_{\Lambda}$ is as in the statement of the lemma,  $\hat{f}_{\Lambda}(\pi)_{i,j}= c^\pi_{i,j}\, \chi_{\Lambda}(\pi)$, where $\chi_{\Lambda}$ is characteristic function of the subset $\Lambda$ in $\widehat{G}_0.$ For simplicity of expressions, we set $S_\Phi(f_{\Lambda})=N_\Phi(F_{f_{\Lambda}}).$ For a non-zero $f \in L^\Phi(G/K),$ the Fourier coefficients $\hat{f}(\pi)_{i,j}$ of $f$ are denoted  by $\tilde{c}^\pi_{i,j}.$ Let $\tilde{f}_{\Lambda}$ be the function $f_\Lambda$ given by \eqref{6} with $c^\pi_{i,j}=\tilde{c}^\pi_{i,j}$.
		Following an idea of  Hardy and Littlewood \cite{Hardy}, we define, 
		\begin{equation} \label{8}
		M= M_\Phi(\Lambda):= \sup \left\lbrace  \frac{S_\Psi(\tilde{f}_{\Lambda})}{N_\Phi(f)} : f \neq 0 \right\rbrace.
		\end{equation} 
		We prove the lemma in three steps.
		
		\noindent {\bf STEP I.} $M<\infty.$ 
		
		\noindent Since $M$ is described by a ratio of norms, without loss of generality we assume that $S_\Psi(\tilde{f}_{\Lambda})=1$ to find a bound on $M$. It follows using continuity of $\Psi$ and the definition of the gauge norm (with $\lambda_0=1,$ see the discussion in the paragraph below the equation \eqref{dis}) that
		\begin{equation}\label{equality13}
		\sum_{\pi \in \widehat{G}_0} \Psi(F_{\tilde{f}_{\Lambda}}(\pi)) k_{\pi}d_{\pi}= \Psi(1).
		\end{equation}
		Since $k_{\pi} \geq 1,$ $F_{\tilde{f}_{\Lambda}}(\pi)=0$ for $\pi \in \widehat{G}_0 \backslash \Lambda$ and $0<\Psi(1)<1,$ at least one term on the left hand side of \eqref{equality13} is greater than or equal to $\frac{\Psi(1)}{\#(\Lambda)},$ where  $\#(\Lambda)$ is the cardinality of $\Lambda.$ If this term is for $\pi=\pi' \in \Lambda$ then we have
		\begin{equation} \label{11}
		0 < \Psi^{-1} \left[ \frac{\Psi(1)}{\#(\Lambda) \,k_{\pi'} \,d_{\pi'}} \right] \leq F_{\tilde{f}_{\Lambda}}(\pi').
		\end{equation}
		Next, 
		\begin{eqnarray*}
			F_{\tilde{f}_{\Lambda}}(\pi)= \left( \frac{1}{k_{\pi}} \sum_{i=1}^{d_\pi} \sum_{j=1}^{k_\pi} |\tilde{c}_{i,j}^\pi|^2 \right)^{\frac{1}{2}}& \leq& \frac{1}{k_{\pi}^{\frac{1}{2}}} \sum_{i=1}^{d_{\pi}} \sum_{j=1}^{k_{\pi}} |\tilde{c}_{i,j}^\pi|\\ & \leq & \frac{1}{k_{\pi}^{\frac{1}{2}}} \int_{G/K} |f(x)| \sum_{i=1}^{d_{\pi}}\sum_{j=1}^{k_{\pi}} |\pi(x)_{i,j}|\, d\mu(x).
		\end{eqnarray*}
		\noindent Using Cauchy-Schwartz inequality, we get 
		\begin{eqnarray*}
			F_{\tilde{f}_{\Lambda}}(\pi) \leq \frac{1}{k_{\pi}^\frac{1}{2}} \int_{G/K} |f(x)| \left( \sum_{i=1}^{d_{\pi}} \sum_{j=1}^{k_{\pi}} |\pi(x)_{i,j}|^2 \right)^{\frac{1}{2}} d_{\pi}^{\frac{1}{2}} k_{\pi}^{\frac{1}{2}}\, d\mu(x).
		\end{eqnarray*}
		Using $\text{Tr}(\pi(x)\pi(x)^*)= k_\pi$,  we get 
		\begin{eqnarray} \label{11'}
		F_{\tilde{f}_{\Lambda}}(\pi) \leq  \int_{G/K} |f(x)| \, d_{\pi}^{\frac{1}{2}} k_{\pi}^{\frac{1}{2}}\, d\mu(x) = d_{\pi}^{\frac{1}{2}} k_{\pi}^{\frac{1}{2}} \int_{G/K} |f(x)| \, d\mu(x).
		\end{eqnarray} 
		Now, by the H$\ddot{\text{o}}$lder inequality  \eqref{Hol} 
		\begin{equation} \label{12}
		F_{\tilde{f}_{\Lambda}}(\pi) \leq d_{\pi}^{\frac{1}{2}} k_{\pi}^{\frac{1}{2}}\, N_\Phi(f).
		\end{equation}
		
		\noindent Now by combining \eqref{11} and \eqref{12}, we have 
		\begin{equation} \label{13}
		\frac{1}{N_\Phi(f)} \leq \left( \frac{d_{\pi'}^{\frac{1}{2}}k_{\pi'}^{\frac{1}{2}}} {\Psi^{-1}\left( \frac{\Psi(1)}{\#(\Lambda) k_{\pi'} d_{\pi'} } \right)} \right) < \infty.
		\end{equation}  
		Since the right hand side of \eqref{13} is independent of $f,$ we have  $M<\infty.$

		\noindent {\bf STEP II.} $M$  is independent of $\Lambda.$
		
		\noindent For $f_{\Lambda}$  as in  \eqref{6}, define $g$ by 
		\begin{equation} \label{18}
		g(x):= \Psi'\left( \frac{|f_{\Lambda}(x)|}{N_{\Psi}(f_{\Lambda})} \right) \text{sgn}(f_{\Lambda}(x)),
		\end{equation}
		where  $\text{sgn}(z)= z/|z|$ for $z \neq 0$ and $0$ for $z=0.$ Since $\Phi$ is continuous and $\mu(G/K)=1$ it follows from the discussion in \cite[p. 175]{Zygmund} (see also \cite[Chapter VI]{rao})  that $N_\Phi(g)=1,$ and that the H$\ddot{\text{o}}$lder's inequality \eqref{Hol} is an equality for the functions $f_{\Lambda}$ and $g$, that is,  
		\begin{eqnarray*}
			N_{\Psi}(f_{\Lambda})  = N_{\Phi}(g) N_{\Psi}(f_{\Lambda})= \left| \int_{G/K} g(x) f_{\Lambda}(x)\,d\mu(x) \right|. 
		\end{eqnarray*}
		Using the Parseval formula, we have  
		\begin{eqnarray*}
			N_{\Psi}(f_{\Lambda}) &=& \left| \sum_{\pi \in \Lambda} d_\pi \sum_{i=1}^{d_{\pi}} \sum_{j=1}^{k_{\pi}} \hat{g}(\pi)_{i,j} \overline{\hat{f}(\pi)}_{i,j} \right| \,\,\,\,\,\, ( \text{by using } \sum_{l} a_l b_l \leq \left(\sum a_l^2 \right)^\frac{1}{2} \left(\sum b_l^2 \right)^\frac{1}{2} )\\ &\leq & \sum_{\pi \in \Lambda} d_\pi \left( \sum_{i=1}^{d_{\pi}} \sum_{j=1}^{k_{\pi}} |\hat{f}(\pi)_{ij}|^2 \right)^{\frac{1}{2}}  \left( \sum_{i=1}^{d_{\pi}} \sum_{j=1}^{k_{\pi}} |\hat{g}(\pi)_{ij}|^2 \right)^{\frac{1}{2}}   \\ &= &  \sum_{\pi \in \Lambda} k_\pi d_\pi\, F_{f_{\Lambda}}(\pi) \,F_{\tilde{g}_{\Lambda}}(\pi)  \,\,\,\,\, \text{(by H$\ddot{\text{o}}$lder's inequality)}\\
			&\leq & N_\Phi(F_{f_{\Lambda}}) N_\Psi(F_{\tilde{g}_{\Lambda}}) = S_\Phi(f_{\Lambda}) S_\Psi(\tilde{g}_{\Lambda}).
		\end{eqnarray*}
		By STEP I, we know that $S_\Psi(\tilde{g}_{\Lambda}) \leq M N_\Phi(g) =M $ and therefore
		\begin{eqnarray} \label{M}
		\frac{N_{\Psi}(f_{\Lambda})}{S_\Phi(f_{\Lambda})} \leq S_\Psi(\tilde{g}_{\Lambda}) \leq M.
		\end{eqnarray}
		\noindent Note that $M \geq 1.$ In fact, for $f=1,$ we note that $N_\Phi(1)=1$ as the measure $\mu$ is normalized. By continuity of $\Phi$, $$\sum_{\pi \in \widehat{G}_0} \Phi \left( \frac{F_{\tilde{f}_{\Lambda}}(\pi)}{S(\tilde{f}_{\Lambda})} \right) k_{\pi} d_{\pi}  = \Phi(1).$$ 
		
		\noindent  Now, by choosing $\pi' \in \Lambda$ such that \eqref{11} holds,  we have 
		\begin{equation}
		S_\Psi(\tilde{f}_{\Lambda}) \geq \frac{1}{\left[ \Phi^{-1}\left( \frac{\Phi(1)}{k_{\pi'}d_{\pi'}}\right)\right]} = r_0\,  \text{(say)}.
		\end{equation}
		Note that $r_0 \geq 1.$ Indeed, since $\Phi$ is increasing and $0<\Phi(1)<1,$ we have $r_0=\frac{1}{\left[ \Phi^{-1}\left( \frac{\Phi(1)}{k_{\pi'}d_{\pi'}}\right)\right]} \geq \frac{1}{\left[ \Phi^{-1}\left( \frac{1}{k_{\pi'}d_{\pi'}}\right)\right]} \geq 1.$
		Therefore,  $M \geq \frac{S_\Psi(\tilde{f}_{\Lambda})}{N_\Phi(f)} \geq r_0 \geq 1.$ By continuity of norms, there is a function $f_{\Lambda}$ such that  
		$M= \frac{N_\Psi(f_{\Lambda})}{S_\Phi(f_{\Lambda})}.$ 
		Consequently, from \eqref{M} we get $S_\Phi(\tilde{g}_{\Lambda})=M.$ We fix this $f_{\Lambda}$ and set $g$ as in \eqref{18} for the remaining part of this step. 
		
		Let us denote $S_\Phi$ by $S_2$ for the Young function $\Phi(x)= \frac{|x|^2}{2}.$ Using the Bessel inequality, we get 
		\begin{equation} \label{19}
		S_2^2(\tilde{g}_{\Lambda}) = \sum_{\pi \in \Lambda} d_{\pi}^2 \sum_{i=1}^{d_{\pi}}\sum_{j=1}^{k_{\pi}} |\hat{g}(\pi)_{i,j}|^2 \leq \int_{G/K} |g|^2 \,d\mu \leq N_\Psi(g^2),
		\end{equation}
		where the last inequality follows from H$\ddot{\text{o}}$lder's  inequality $(\textnormal{since }N_\Phi(1)=1).$ Set $\Psi_1(t)= \Psi(t^2).$ Then $\Psi_1$ is a Young function satisfying $\Psi \prec \Psi_1.$ Since $g$ is a bounded function, by setting $a^2= N_\Psi(g^2)(<\infty),$ we get
		\begin{equation*} 
		\Psi_1(1)= \Psi(1)= \int_{G/K} \Psi\left( \frac{|g|^2}{a^2} \right)\, d\mu(x) = \int_{G/K} \Psi_1\left(\frac{|g|}{a} \right)\, d\mu(x),
		\end{equation*}
		whence $a= N_{\Psi_1}(g).$ Thus, by \eqref{19}, we have  
		\begin{equation}
		S_2(\tilde{g}_{\Lambda}) \leq N_{\Psi_1}(g).
		\end{equation}
		Now, we find an absolute bound for $M$. If $a=N_{\Psi_1}(g)$ then, by definition, there exists $b_0>0$ such that 
		\begin{eqnarray*}
			1= \int_{G/K} \Psi_1\left( \frac{g}{ab_0} \right)d\mu(x) = \int_{G/K} \Psi_1 \left[ \frac{1}{ab_0} \Psi' \left( \frac{|f_{\Lambda}|}{N_\Psi(f_{\Lambda})} \right)\right]d\mu(x).
		\end{eqnarray*}
		Since  $\Psi'(t) \leq c_0\,t^p$ for some $p \geq 1,$ we get 
		\begin{equation} \label{22}
		1 \leq \int_{G/K} \Psi_1 \left[\frac{c_0}{b_0a} \left( \frac{|f_{\Lambda}|}{N_\Psi(f_{\Lambda})}\right)^p \right]d\mu(x) = \int_{G/K} \Psi_2 \left[b_1 \frac{|f_{\Lambda}|}{N_\Psi(f_{\Lambda})} \right] d\mu(x),
		\end{equation}
		where $\Psi_2(t)= \Psi_1(t^p)$ and $b_1= \left( \frac{c_0}{b_0a} \right)^\frac{1}{p}>0.$ Thus $\Psi_2$ is a Young function satisfying $\Psi \prec \Psi_1 \prec \Psi_2.$ Then \eqref{22} gives the following important inequality: there exists a constant $b_2$ depending only on $\Psi_2$ and independent of $f_{\Lambda},$ such that 
		\begin{equation}
		N_{\Psi_2} \left( \frac{b_1f_{\Lambda}}{N_\Psi(f_{\Lambda})} \right) \geq b_2 >0,
		\end{equation}
		(see \cite[Theorem 2, Chapter III]{Rao}).
		Since $N_{\Psi_2}(\cdot)$ is a norm, by the definition of $b_1$, we get 
		\begin{equation}
		\left[\frac{N_{\Psi_2}(f_{\Lambda})}{N_\Psi(f_{\Lambda})} \right]^p \geq b_2^p \left(\frac{b_0}{c_0} \right) N_{\Psi_1}(g) = b_3 N_{\Psi_1}(g), 
		\end{equation}
		where $b_3= b_2^p \frac{b_0}{c_0}.$ Since we have $\Psi_0 \prec \Psi \prec \Psi_1 \prec \Psi_2,$ by Lemma \ref{le}, $\Phi_2 \prec \Phi_1 \prec \Phi \prec \Phi_0$ so that $\ell^{\Phi_2} \subset \ell^{\Phi_1} \subset \ell^\Phi \subset \ell^{\Phi_0}= \ell^2 \subset \ell^\Psi \subset \ell^{\Psi_1} \subset \ell^{\Psi_2}.$ Now, for some $r_2>0,$ we get the following inequalities: 
		\begin{eqnarray} \label{2u}
		1 \leq M= M_{\Phi} = S_\Psi(\tilde{g}_{\Lambda})& \leq & r_2 S_2(\tilde{g}_{\Lambda}) \leq  \frac{r_2}{b_3} \left[\frac{N_{\Psi_2}(f_{\Lambda})}{N_\Psi(f_{\Lambda})} \right]^p \leq \frac{r_2}{b_3} \left[\frac{N_{\Psi_3}(f_{\Lambda})}{N_\Psi(f_{\Lambda})} \right]^p,
		\end{eqnarray}
		where $\Psi_3(t)= \frac{c_0}{p+1} t^{2p(p+1)},$ so $\Psi_2 \prec \Psi_3,$ (Since $\Psi' (t) \leq c_0 \,t^p$ so $\Psi(t)= \frac{c_0}{p+1}t^{p+1}$ and therefore $\Psi_2(t)=\Psi_1(t^p)=\Psi(t^{2p}) \leq \frac{c_0}{p+1}t^{2p(p+1)} = \Psi_3(t) $). Let $\Phi_3$  be the  complementary function of $\Psi_3.$ Then $S_{\Phi_3}(f_{\Lambda}) \leq b_4 S_\Phi(f_{\Lambda})$ for some $b_4>0$ depending only on $\Phi_3$ and $\Phi$ only. Therefore, by \eqref{2u}, we have
		\begin{eqnarray}\label{25}
		1 \leq M= M_{\Phi}&\leq & \frac{r_2}{b_3} \left[ \frac{M_{\Phi_3}S_{\Phi_3}(f_{\Lambda})}{M_\Phi S_{\Phi}(f_{\Lambda})} \right]^p \leq \frac{r_2}{b_3} \left[ b_4 \frac{M_{\Phi_3}}{M_\Phi} \right]^p.
		\end{eqnarray}
		
		\noindent
		Hence,  \begin{equation} \label{27}
		1 \leq M_\Phi^{p+1} \leq r_3^p \,M_{\Phi_3}^p,
		\end{equation}
		for some positive constant $r_3>0$ which depend only on $\Phi, \Phi_2, \Phi_3$ and the ordering constants. But note that $L^{\Psi_3}(G/K)= L^{p'}(G/K),$ where $p'=2p(p+1) \geq 2.$ It follows from Theorem \ref{YH} that  $M_{\Phi_3} \leq r_4< \infty,$  for a positive constant $r_4$ depending on $c_0$ and $r.$ Therefore \eqref{27} gives $$1 \leq M_\Phi \leq r_5< \infty $$ where $r_5= (r_3r_4)^{\frac{p}{p+1}},$ which is independent of $\Lambda.$
		
		\noindent {\bf STEP III.} By setting $\bar{r}_0= r_5,$ equations \eqref{8} immediately give the required inequality in \eqref{7}. Since $\Psi_2$ and $\Psi_3$  depend on the complimentary Young function $\Psi$ of $\Phi$, all the  constants involve depend on $\Phi$ and the ordering $\Phi \prec \Phi_0,$ and perhaps on $c_0$ and $p.$ This completes the proof of the lemma. 
	\end{proof} 
	
	Now, we are ready to prove the Hausdorff-Young inequality for Orlicz spaces on compact homogeneous manifolds.
	\begin{thm} \label{OYH}  Let $G/K$ be a compact homogeneous manifold with the normalized measure $\mu$ and let $(\Phi, \Psi)$ be a pair of continuous normalized Young functions such that \begin{itemize}
			\item[(i)] $\Phi \prec \Phi_0,$ where $\Phi_0(t)= \frac{1}{2} t^2,$ 
			\item[(ii)] $\Psi'(t) \leq c_0\, t^p, \, t \geq 0,$ for some $p \geq 1,$ 
		\end{itemize} where $c_0$ is a positive constant. If $f \in L^\Phi(G/K)$ then there is  $r_0\ge1$ such that 
		$$N_\Psi(F_f) \leq r_0\, N_\Phi(f).$$
	\end{thm}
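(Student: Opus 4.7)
The strategy is to upgrade Lemma~\ref{m}, which already gives the desired inequality for finite class-I Fourier polynomials, to arbitrary $f\in L^\Phi(G/K)$ via a density argument on the group side and a Fatou-type lower semicontinuity on the Fourier side. Given $f\in L^\Phi(G/K)$ I would first produce a sequence $\{f_n\}_{n\ge 1}$ of finite Fourier polynomials with $N_\Phi(f_n-f)\to 0$, and hence $N_\Phi(f_n)\to N_\Phi(f)$. The ingredients are all present in Section~2: because $G/K$ is compact one has $M^\Phi(G/K)=L^\Phi(G/K)$, so $C(G/K)$ is $N_\Phi$-dense in $L^\Phi(G/K)$; the Peter--Weyl theorem for the quotient provides uniform approximation of $C(G/K)$ by class-I finite Fourier polynomials; and since $\mu$ is a probability measure, $N_\Phi(g)\le \|g\|_\infty$ for $g\in C(G/K)$, so uniform convergence upgrades to Luxemburg convergence and a diagonal extraction delivers the desired polynomials $f_n$.

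Lemma~\ref{m} then gives $N_\Psi(F_{f_n})\le \bar{r}_0\,N_\Phi(f_n)$ for every $n$, where $\bar{r}_0\ge 1$ depends only on $\Phi$, on the ordering $\Phi\prec\Phi_0$, and on the constants $(c_0,p)$ controlling $\Psi'$. The continuous embedding $L^\Phi(G/K)\hookrightarrow L^1(G/K)$ forces $\widehat{f_n}(\pi)_{i,j}\to\widehat{f}(\pi)_{i,j}$ for every fixed $\pi\in\widehat{G}_0$ and every pair $(i,j)$, so $F_{f_n}(\pi)\to F_f(\pi)$ pointwise on $\widehat{G}_0$. The final step is lower semicontinuity of the discrete Orlicz gauge norm: for any $\lambda>\liminf_n N_\Psi(F_{f_n})$, a subsequence satisfies $\sum_{\pi}\Psi(F_{f_{n_k}}(\pi)/\lambda)\,k_\pi d_\pi\le \Psi(1)$, and Fatou's lemma applied to the weighted counting measure $\{k_\pi d_\pi\}$, using continuity and monotonicity of $\Psi$, promotes this to $\sum_{\pi}\Psi(F_f(\pi)/\lambda)\,k_\pi d_\pi\le\Psi(1)$. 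Hence $N_\Psi(F_f)\le \liminf_n N_\Psi(F_{f_n})\le \bar{r}_0\, N_\Phi(f)$, which is the claim with $r_0:=\bar{r}_0$.

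The main subtlety lives in the density step: one needs precisely the identification $M^\Phi(G/K)=L^\Phi(G/K)$ that the paper records before Lemma~\ref{m}, since without it class-I Fourier polynomials may fail to be $N_\Phi$-dense in $L^\Phi(G/K)$ and the limit passage would collapse to $M^\Phi$-valued $f$ only. Everything else—pointwise convergence of Fourier matrix entries coming from $L^1$-convergence, and the Fatou-type lower semicontinuity of $N_\Psi$ on the discrete side—is routine, and the constant $r_0$ coincides with $\bar{r}_0$ from Lemma~\ref{m}, retaining its stated dependence on $\Phi$, on the ordering, and on $(c_0,p)$.
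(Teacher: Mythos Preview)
Your proposal is correct and follows essentially the same route as the paper: approximate $f$ in $N_\Phi$ by finite class-I Fourier polynomials, apply Lemma~\ref{m} to each approximant, and pass to the limit on the Fourier side. The paper uses the partial sums $\tilde f_\Lambda$ (so that $F_{\tilde f_\Lambda}\uparrow F_f$ monotonically) and asserts convergence of both norms directly, whereas you use arbitrary polynomial approximants together with pointwise convergence of Fourier coefficients and a Fatou lower-semicontinuity argument for $N_\Psi$; this is a slightly more careful execution of the same limit step and yields the same constant $r_0=\bar r_0$.
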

	\begin{proof} 
		Let $f \in L^\Phi(G/K)$ and let $\Lambda $ be a finite subset of $\widehat{G}_0.$ Suppose $\tilde{f}_{\Lambda}$ is given by \eqref{6} where ${c}_{i,j}^\pi= \hat{f}(\pi)_{i,j}.$ The set $\lbrace \tilde{f}_{\Lambda}: \Lambda \subset \widehat{G}_0 \rbrace$ is the collection of all simple functions which, in particular, contains the set of all matrix coefficients of type I representation of $G$ and therefore it is dense in $L^\Phi(G/K) \subset L^2(G/K)$. We have $\lim_{\Lambda \subset \widehat{G}_0} N_\Phi(f-\tilde{f}_{\Lambda})=0$ and therefore, $\tilde{c}_{ij}^\pi= \widehat{f}_{\Lambda}(\pi)_{ij} \rightarrow  \widehat{f}(\pi)_{ij}=\tilde{c}_{ij}^\pi.$ Consequently,  we have $$\lim_{\Lambda \subset \widehat{G}_0} N_\Psi(F_{\tilde{f}_{\Lambda}})= N_\Psi(F_f),$$
		where the limit as $\Lambda$ varies in $\widehat{G}_0$ is taken using the partial order defined by inclusion of subsets of $\widehat{G}_0.$
		Now, by using this with the inequality $N_\Psi(F_{f_{\Lambda}}) \leq \bar{r}_0\, N_\Phi(f_{\Lambda})$ of \eqref{7} of Lemma \ref{m}, we get $N_\Psi(F_f) \leq r_0\, N_\Phi(f),$  where $r_0=\bar{r}_0.$  This completes the proof. 
	\end{proof}
	\begin{rem} \begin{itemize}
			\item[(i)] For the Lebesgue space, we have that constant $r_0=1$  (see Theorem \ref{YH}); however it is clear from the proof of Lemma \ref{m} that the constant $r_0$ in Theorem \ref{OYH}  is greater or equal to 1. 
			\item[(ii)] We give an example of a pair of Young functions $(\Phi, \Psi)$ satisfying the condition of Theorem \ref{OYH} such that the corresponding  Orlicz spaces are not Lebesgue spaces. This example is taken from the \cite{rao} which was originally discovered by Riordan \cite{Rio}. For $1 < p < 2,$ we define function 
			$\Phi(x)= \frac{x^p}{p} \ln(x) \ln(\ln x)$ for $x \geq x_0$ and $\Phi(x)=  \frac{x^p}{p} \ln(\frac{1}{x}) \ln(\ln \frac{1}{x})$ for $x \leq x_1.$ We choose $x_0$ large enough and $x_1$ small enough so that $\Phi(x)$ gives a convex function by joining the points $(x_1, \Phi(x_1))$ and $(x_0, \Phi(x_0))$ by a straight line. If $q$ denotes the Lebesgue conjugate of $p,$ that is, $q= \frac{p}{p-1}$ then the function $\Psi$ is given by $\Psi(x)=\frac{x^q}{q} L(x)^{\frac{q}{p}},$ where  $L(x)= \ln(x) \ln(\ln x)$ for $x \geq x_0' $ and $\ln(\frac{1}{x}) \ln(\ln \frac{1}{x})$ for $x \leq x_1'.$ Then we again choose $x_0''$ large enough and $x_1'$ small enough so that $\Phi(x)$ gives a convex function by joining the points $(x_1, \Phi(x_1))$ and $(x_0, \Phi(x_0))$ by a straight line. Although,  $\Psi$ may not be the complementary function but it is equivalent to the complimentary function. 
			\item[(iii)] For $1 < p \leq 2, $ if $\Phi(x)= \frac{x^p}{p}$ and $\Psi(x)= \frac{x^q}{q}$ with $q= \frac{p}{p-1},$ then using the above method we can get usual Hausdorff-Young inequality for Lebesgue spaces on compact homogeneous manifolds. In fact, this method was used to solve maximal problem in the case for compact groups by Hirschman \cite{Hirschman} using the same norm on Lebesgue spaces as we considered in this paper . 
		\end{itemize}

	\end{rem}

	\section*{Acknowledgment}
	The authors are supported by Odysseus I Project (by FWO, Belgium). The second author is also supported by the Leverhulme Grant RPG-2017-151 and by EPSRC Grant EP/R003025/1.


\begin{thebibliography}{aaa}
		
		\normalsize
		\baselineskip=17pt 
		
		
		\bibitem{ANR14} R. Akylzhanov, E. Nursultanov and M. Ruzhansky, Hardy-Littlewood-Paley inequalities and Fourier multipliers on $SU(2),$ \emph{ Studia Mathematica} 234 (2016) 1-29.  
		
		\bibitem{RR} R. Akylzhanov, E. Nursultanov and M. Ruzhansky,
		Hardy-Littlewood, Hausdorff-Young-Paley inequalities, and $L^p-L^q$ Fourier multipliers on compact homogeneous manifolds, \emph{Journal of Mathematical Analysis and Applications} 479(2) (2019) 1519-1548.  arXiv:1504.07043
		
		\bibitem{AR16} R. Akylzhanov and M. Ruzhansky, Fourier multipliers and group von Neumann algebras, \emph{ Comptes Rendus Mathematique} 354(8) (2016) 766-770.
		
		\bibitem{AR} R. Akylzhanov and M. Ruzhansky, $L^p$-$L^q$ multipliers on locally compact groups, \emph{Journal of Functional Analysis} (2019). DOI:  https://doi.org/10.1016/j.jfa.2019.108324 
		
		
		\bibitem{Applebaum} D. Applebaum, \emph{Probability on Compact Lie Groups}, With a foreword by Herbert Heyer. Probability Theory and Stochastic Modelling, vol. 70. Springer, Cham (2014).
		
		
		\bibitem{SD}  S. D. Chatterji, Remarks on the Hausdorff-Young inequality, \emph{Enseign. Math.(2)} 46(3-4),  339–348, (2000).
		
		
		
		
		
		\bibitem{AR14} A. Dasgupta and M. Ruzhansky, Gevrey functions and ultradistributionson compact Lie groups and homogeneous spaces, \emph{Bull. Sci. Math.} 138(6), 756-782 (2014).
		
		
		
		
		
		
		
		\bibitem{VR} V. Fischer and M. Ruzhansky,\emph{ Quantization on nilpotent Lie groups}, Progress in Mathematics, 314. Birkhäuser/Springer, [Cham], (2016) xiii+557 pp.
		
		\bibitem{Hardy} G. H. Hardy and J. E. Littlewood, Some new properties of Fourier constant,  \emph{Math. Annalen} 97 (1927) 159-209.
		
		\bibitem{HR74} E. Hewitt and K. A. Ross, Rearrangement of $L^r$ Fourier series on compact abelian groups,  \emph{Proc. Lond. Math. Soc.}, 29(3) (1974) 519-540. 
		\bibitem{HR} E. Hewitt and K. A. Ross, \emph{Abstract Harmonic analysis Vol. II: Structure and analysis for compact groups Analysis on locally compact Abelian groups.} Die Grundlehren der mathematischen Wissenschaften, Band 152 Springer-Verlag, New York-Berlin (1970).
		\bibitem{Hirschman} I. I. Hirschman, Jr., A maximal problem in harmonic analysis-II, \emph{Pacific J. Math.}, 9, 525-540 (1959).
		
		
		
		
		
		\bibitem{Kumar} V. Kumar,  R. Sarma and N. S. Kumar, Orlicz spaces on hypergroups, \emph{ Publ. Math. Debrecen}, 94(1-2) (2019) 31–47.
		
		\bibitem{KR} V. Kumar and R. Sarma, The Hausdorff-Young inequality for Orlicz spaces on compact hypergroups, (to appear in) {\it Colloquium Mathematicum} (2019).
		\bibitem{Kumar1} V. Kumar, Orlicz spaces and amenability of hypergroups, \emph{ Bull. Iran. Math. Soc.} (2019). https://doi.org/10.1007/s41980-019-00310-7
		
		
		
		\bibitem{Vish}	V. Kumar,  Pseudo-differential operators on homogeneous spaces of compact and Hausdorff groups, \emph{Forum Math.} 31(2), 275-282 (2019).
		
		\bibitem{Kunze} R. A. Kunze, $L^p$ Fourier transform on locally compact unimoduler groups, \emph{Trans. Amer. Math. Soc.}, 89 (1958).
		
		
		
		\bibitem{NRT16}	E. Nursultanov, M. Ruzhansky and  S. Tikhonov, Nikolskii inequality and Besov, Triebel-Lizorkin, Wiener and Beurling spaces on compact homogeneous manifolds,\emph{ Ann. Sc. Norm. Super. Pisa Cl. Sci. (5)} 16(3), 981-1017 (2016).
		
		
		\bibitem{Raoyoung} M.M. Rao, Extension of the Hausdorff-Young Theorem, \emph{Israel J. Math.}, 6 (1967) 133-149. 
		\bibitem{Rao} M.M. Rao and Z.D. Ren, \emph{Theory of Orlicz spaces}, Dekker, New York (1991).
		\bibitem{Rao1} M. M. Rao, Convolution of vector fields-II : random walk models, \emph{Nonlinear Analysis}, 47 (2001) 3599-3615.
		\bibitem{rao} M. M. Rao and Z. D. Ren, \emph{Applications of Orlicz spaces}  Monographs and Textbooks in Pure and Applied Mathematics, 250, Marcel Dekker Inc., New York, (2002).
		\bibitem{Rao2} M. M. Rao, Convolutions of vector fields-III: Amenability and spectral properties, \emph {Real and stochastic analysis, Trends Math., Birkhäuser Boston, Boston, MA}, (2004) 375-401.	
		
		\bibitem{Rio} W. J. Riordan, \emph{On the interpolations of operator,}, Ph. D. Thesis, University of Chicago, (1957).
		
		\bibitem{RuzT} M. Ruzhansky and V. Turunen, \emph{ Pseudo-differential Operators and Symmetries: Background Analysis and Advanced Topics}. Birkha\"user-Verlag, Basel (2010).
		
		
		
		
		
		\bibitem{Vilenkin} N. Ja. Vilenkin and A.U.  Klimyk, \emph{  Representation  of  Lie  Groups  and  Special  Functions.  Vol.  1.  Simplest Lie Groups, Special Functions and Integral Transforms,} Kluwer Academic Publishers Group, Dordrecht (1991).
		
		\bibitem{Zygmund} A. Zygmund, \emph{Trigonometry series} Vol. $I$ and $II$ combined, third edition, with a foreword by Robert A. Fefferman, Cambridge mathematical library, Cambridge University Press, Cambridge, (2002).
	\end{thebibliography}
\end{document}